\begin{document}
	
\mainmatter  	
	
\title{A Stochastic Capital-Labour Model\\ with Logistic Growth Function\thanks{This is 
a preprint whose final form is published by Springer Nature Switzerland AG in the book 
'Dynamic Control and Optimization'.}}

\titlerunning{A Stochastic Capital-Labour Model}  

\author{Houssine Zine\inst{1} \and Jaouad Danane\inst{2} \and Delfim F. M. Torres\inst{3}}

\authorrunning{H. Zine, J. Danane and D. F. M. Torres} 

\tocauthor{Houssine Zine, Jaouad Danane and Delfim F. M. Torres}

\institute{Center for Research and Development in Mathematics and Applications (CIDMA),\\ 
Department of Mathematics, University of Aveiro, 3810-193 Aveiro, Portugal\\ 
\email{zinehoussine@ua.pt}; \texttt{http://orcid.org/0000-0002-8412-7783}
\and
Laboratory of Systems, Modelization and Analysis for Decision Support,\\
Hassan First University, National School of Applied Sciences, Berrechid, Morocco\\ 
\email{jaouaddanane@gmail.com}; \texttt{http://orcid.org/0000-0001-7080-9743}
\and
Center for Research and Development in Mathematics and Applications (CIDMA),\\ 
Department of Mathematics, University of Aveiro, 3810-193 Aveiro, Portugal\\ 
\email{delfim@ua.pt}; \texttt{http://orcid.org/0000-0001-8641-2505}}

\maketitle  


\begin{abstract}
We propose and study a stochastic capital-labour model with logistic 
growth function. First, we show that the model has a unique positive 
global solution. Then, using the Lyapunov analysis method, we obtain 
conditions for the extinction of the total labour force. Furthermore, 
we also prove sufficient conditions for their persistence in mean. 
Finally, we illustrate our theoretical results through numerical simulations.

\keywords{capital-labour mathematical model; 
stochastic differential equations;
Brownian motion;
extinction and persistence.}

\medskip

\noindent {\bf MSC 2020:} 91B70. 
\end{abstract}


\section{Introduction}

Labour supply and demand are essential variables governing the labour market.
They are influenced by demographic factors and the gross domestic product, which 
vary from household to household. In our context, the supply of labour 
is represented by the number of free jobs and the demand for labour, noting 
that the workforce or labour force is the total number of people eligible 
to work \cite{MR4154640,MR4068867}.

Motivated by the previous information, we propose to model the labour market 
by ordinary differential equations (ODEs) describing 
the different interactions between the essential components, that is, 
the free jobs and the labour force. The suggested
model will take the following form:
\begin{equation}
\label{sy1}
\left\{
\begin{aligned}
\dfrac{du}{dt}(t)&=ru(t)\left(1-\dfrac{u(t)}{K}\right)-mu(t)v(t),\\
\dfrac{dv}{dt}(t)&=mu(t)v(t)-dv(t),
\end{aligned}
\right.
\end{equation}
where $u$ denotes the number of free jobs and $v$ represents 
the total unemployed labour force. The positive constant $r$ 
is the natural per capita growth of free jobs and $K$ 
is the theoretical eventual maximum of the number
of free jobs (related to the theoretical maximum of investment capital). 
The positive parameter $d$ is the disappearance rate of labour force 
and $muv$ is the rate by which the labour force fills in the free jobs.
We have adopted the bilinear form to pass from the labour force compartment 
to the free job one, while the recruitment of people depends progressively 
and proportionally to the considered employment policy.

It is well known that economies are subject to randomness in terms
of natural perturbation processes \cite{MR3891564}. Therefore, stochastic 
models are more suitable than deterministic ones, because they can take into 
account not only the mean trend but also the variance structure around it.
Moreover, deterministic models will always produce the same results 
for fixed initial conditions, whereas the stochastic ones may give different predicted values.
Thus, in order to take into account all the previous arguments,
in this paper we propose the following stochastic capital-labour model 
with a logistic growth function: 
\begin{equation}
\label{sy2}
\left\{
\begin{aligned}
du(t)&=\left[ru(t)\left(1-\dfrac{u(t)}{K}\right)-mu(t)v(t)\right]dt-\sigma u(t)v(t)dB,\\
dv(t)&=\left[mu(t)v(t)-dv(t)\right]dt+\sigma u(t)v(t)dB,
\end{aligned}
\right.
\end{equation}
where $B(t)$ is a standard Brownian motion with intensity $\sigma$, 
defined on a complete filtered probability space 
$\left( \Omega,\mathcal{F},(\mathcal{F}_t)_{t\geq 0},\mathbb{P}\right)$ 
with the filtration $(\mathcal{F}_t)_{t\geq 0}$ satisfying the usual 
conditions \cite{MyID:456}. The motivation to use the logistic growth
can be found in \cite{Lotfi:2019}; the reader interested in the stochastic 
techniques is refereed to \cite{Mahrouf:Axioms,MR4173153}
and references therein.

Our work is organized as follows. First, in Section~\ref{sec:2},
we prove existence and uniqueness of a global positive solution 
to our stochastic model \eqref{sy2}. 
Then, using the Lyapunov analysis method, we prove in Section~\ref{sec:3} 
the extinction of the total labour force under an appropriate condition.
Furthermore, in Section~\ref{sec:4}, we give sufficient conditions 
for the persistence in mean of the total labour force.
Follows some numerical simulations to illustrate our analytical results 
(Section~\ref{sec:5}). We finish with Section~\ref{sec:6} of conclusions.

All the equations and inequalities in the paper 
are understood in the almost surely (a.s.) sense.


\section{Existence and uniqueness of global economic solutions}
\label{sec:2}

To investigate the dynamical behaviour of a population model, 
the first concern is whether the solution of the model is positive and global. 
In order to get a stochastic differential equation for which a unique global 
solution exists, i.e., there is no explosion within a finite time, for any initial value, 
standard assumptions for existence and uniqueness of solutions are
the linear growth condition and the local Lipschitz condition 
(cf. Mao \cite{14}). However, the coefficients 
of system \eqref{sy2} do not satisfy the linear growth condition 
as the incidence is non-linear. Therefore, the solution of system \eqref{sy2} 
may explode at a finite time. In this section, using the Lyapunov analysis method \cite{9,11}, 
we show that the solution of system \eqref{sy2} is positive and global.

\begin{theorem}
For any given initial value $(u(0),v(0)) \in \mathbb{R}^2_+$, 
there exists a unique positive solution $(u(t),v(t)) \in \mathbb{R}^2_+$ 
of model \eqref{sy2} for all $t\geq 0$ a.s. Moreover,
\begin{equation*}
\limsup_{t\rightarrow \infty} u(t)\leq \dfrac{rK}{\mu} \text{ a.s.},
\quad
\limsup_{t\rightarrow \infty} v(t) \leq \dfrac{rK}{\mu} \text{ a.s.},
\end{equation*} 
where $\mu =\min\{r,d\}$.
\end{theorem}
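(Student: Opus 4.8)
The plan is to follow the by-now-standard Lyapunov route for stochastic population models, but hinging on one structural observation specific to this system that I expect to be the crux. Since the drift and diffusion coefficients of \eqref{sy2} are polynomial, hence locally Lipschitz, the classical theory (Mao \cite{14}) guarantees a unique local solution $(u(t),v(t))$ on $[0,\tau_e)$, where $\tau_e$ is the explosion time, for any initial datum in $\mathbb{R}^2_+$. The entire task then reduces to showing $\tau_e=\infty$ a.s.\ and that the solution cannot leave $\mathbb{R}^2_+$.

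For this I would introduce, for large integers $n$, the exit times $\tau_n=\inf\{t\in[0,\tau_e): u(t)\notin(1/n,n)\text{ or }v(t)\notin(1/n,n)\}$, set $\tau_\infty=\lim_{n\to\infty}\tau_n\le\tau_e$, and aim to prove $\tau_\infty=\infty$ a.s. The key preliminary observation is that the total quantity $N:=u+v$ carries \emph{no} stochastic part: adding the two equations of \eqref{sy2} makes the $\pm\sigma uv\,dB$ terms cancel, so $N$ obeys the pathwise ordinary differential equation $\dot{N}=ru(1-u/K)-dv$. Estimating the resulting concave quadratic in $u$ and using $\mu=\min\{r,d\}\le r,d$, I would derive $\dot{N}\le rK-\mu N$, whence a deterministic comparison yields the a priori bound $N(t)\le\max\{u(0)+v(0),\,rK/\mu\}=:\tilde{M}$ for all $t\in[0,\tau_\infty)$.

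With this bound secured I would take the Lyapunov function $V(u,v)=(u-1-\ln u)+(v-1-\ln v)$, which is nonnegative and blows up as either coordinate tends to $0$ or $\infty$, and compute $LV$ via It\^o's formula. The obstacle lives here: the bilinear noise produces the quadratic terms $\tfrac{\sigma^2}{2}(u^2+v^2)$ in $LV$, and the $v^2$ term in particular is not dominated by any negative drift contribution, so $LV$ is not bounded a priori — this is precisely the failure of the linear growth condition flagged before the theorem. The resolution is to work on $[0,\tau_n]$, where $u,v\le\tilde{M}$ by the comparison bound, so that every term of $LV$ is controlled by a constant $C=C(\tilde{M})$ and $LV\le C$ there.

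The argument then closes in the usual way: integrating $dV=LV\,dt+(\cdots)\,dB$, taking expectations of the stopped process and discarding the martingale gives $\mathbb{E}\,V(u(\tau_n\wedge T),v(\tau_n\wedge T))\le V(u(0),v(0))+CT$. Since $\tilde{M}$ rules out exits through the upper boundary $n$ for large $n$, any exit before $T$ forces a coordinate to equal $1/n$, where $V\ge\ln n-1$; comparing the two estimates yields $\mathbb{P}(\tau_n\le T)\to 0$, hence $\tau_\infty=\infty$ a.s.\ and the solution remains in $\mathbb{R}^2_+$. Finally, letting $t\to\infty$ in $\dot{N}\le rK-\mu N$ gives $\limsup_{t\to\infty}N(t)\le rK/\mu$, and since $u,v\ge 0$ this delivers the two stated limsup inequalities. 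The main difficulty to watch is the ordering in the penultimate step: one must establish the a priori upper bound \emph{before}, not after, estimating $LV$, because the bilinear noise makes that bound indispensable.
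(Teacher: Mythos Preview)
Your proposal is correct, and both you and the paper exploit the same structural fact---that $N=u+v$ obeys the noise-free inequality $\dot N\le rK-\mu N$---to obtain the limsup bounds. The positivity/non-explosion argument, however, is organised differently. The paper does not introduce the exit times $\tau_n$ or the Khasminskii-type Lyapunov $V=(u-1-\ln u)+(v-1-\ln v)$; instead it sets $F=\ln(uv)$, derives the lower bound $dF\ge H(s)\,ds+\sigma(u-v)\,dB$ with $H$ bounded below on compact time intervals, and argues by contradiction that if the first time $\tau^+$ a coordinate hits $0$ were finite then $F(\tau^+)=-\infty$ while the integrated inequality keeps $F(\tau^+)$ finite. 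Your route is longer but more self-contained: by establishing the deterministic bound on $N$ \emph{first}, you simultaneously exclude upward explosion and supply the missing control on the $\tfrac{\sigma^2}{2}(u^2+v^2)$ term in $LV$, whereas the paper treats boundedness only after asserting global existence and does not make the exclusion of explosion to $+\infty$ explicit. The paper's argument is more economical; yours is more robust and makes the logical dependence on the a priori bound transparent.
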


\begin{proof}
Since the drift and the diffusion of \eqref{sy2} are locally Lipschitz, 
then for any given initial value $(u(0),v(0))\in\mathbb{R}^2_+$, 
there exists a unique local solution for $t\in [0,\tau_e)$, 
where $\tau_e$ is the explosion time. To show that this solution is global,
we need to show that $\tau_e=+\infty$. Define the stopping time $\tau^+$ as
$$
\tau^+ := \inf\left\{t\in [0,\tau_e)\; 
: u(t)\leq 0 \text{ or }  v(t)\leq 0\right\}.
$$
We suppose that $\tau^+ < +\infty$. For any $t\leq \tau^+$, 
we define the following function: 
\begin{equation*}
F(t) := \ln(u(t)v(t)).
\end{equation*}
By using It\^o's formula and system \eqref{sy2}, we obtain that
\begin{align*}
dF&=r\left(1-\dfrac{u}{K}\right)-mv+mu-d-\dfrac{\sigma^2}{2}(u^2+v^2)+\sigma(u-v)dB\\
&\geq -\dfrac{r}{K}u-mv-d-\dfrac{\sigma^2}{2}(u^2+v^2)+\sigma(u-v)dB.
\end{align*}
Integrating both sides between $0$ and $t$, we get that
\begin{equation}
\label{GS1}
F(t) \geq F(0)+\int_0^t H(s)ds+\sigma \int_0^t(u(s)-v(s))dB(s),
\end{equation}
where $H(s)=-\dfrac{r}{K}u(s)-mv(s)-d-\dfrac{\sigma^2}{2}(u^2(s)+v^2(s))$.
At least one among $u(\tau^+)$ and $v(\tau^+)$ is equal to $0$.
Then, we get 
$$
\lim_{t\rightarrow\tau^+}F(t)=-\infty.
$$
Letting $t\rightarrow\tau^+$ in \eqref{GS1} we obtain
\begin{equation*}
-\infty \geq F(t)
\geq F(0)+\int_0^{\tau^+} H(s)ds
+\sigma \int_0^{\tau^+}(u(s)-v(s))dB(s)> -\infty,
\end{equation*}
which is a contradiction. Thereby, $\tau^+=+\infty$, which means that the model
has a unique global solution $(u(t), v(t)) \in \mathbb{R}^2_+$ a.s. 
We now prove the boundedness. If we sum the equations 
from system \eqref{sy2}, then
$$
dN(t)=\left( ru(t)\left(1-\dfrac{u(t)}{K}\right)-dv(t)\right) dt,
$$
where $N(t)=u(t)+v(t)$. Thus,
\begin{align*}
dN(t)&=\left( ru\left(1-\dfrac{u}{K}\right)-dv\right) dt\\
&=\left( ru-dv-\dfrac{r}{K}(u-K)^2-2ur+rK\right) dt\\
&=\left( -ru-dv-\dfrac{r}{K}(u-K)^2+rK\right) dt,\\
\dfrac{dN}{dt}&\leq  -\mu N +rk,
\end{align*}
where $\mu =\min\{r,d\}$, and so
\begin{align*}
e^{\mu t}\dfrac{dN}{dt}&\leq e^{\mu t}\left( -\mu N +rk\right),\\
\int_0^t e^{\mu s}\dfrac{dN}{ds}ds&\leq \int_0^t e^{\mu s}\left( -\mu N(s) +rK\right)ds,\\
e^{\mu t} N(t)&\leq \dfrac{rK}{\mu}(e^{\mu t}-1)+N(0),\\
 N(t)&\leq \dfrac{rK}{\mu}(1-e^{-\mu t})+N(0) e^{-\mu t},\\
\limsup_{t\rightarrow \infty} N(t)&\leq \dfrac{rK}{\mu} \text{ a.s.}
\end{align*}  
This fact implies that
$\limsup_{t\rightarrow \infty} u(t)\leq \dfrac{rK}{\mu} \text{ a.s.}$
and $\limsup_{t\rightarrow \infty} v(t)\leq \dfrac{rK}{\mu} \text{ a.s.}$,
which completes the proof.\qed
\end{proof} 


\section{Extinction of total labour force}
\label{sec:3}

When studying dynamical systems, it is important 
to discuss the possibility of extinction or 
persistence of a population. Here we investigate 
extinction of the capital-labour. The question
of persistence will be addressed in Section~\ref{sec:4}.

\begin{theorem}
\label{thm3.1}
For any initial data $(u(0),v(0))\in \mathbb{R}^2_+$,
if $ \dfrac{m^2}{2\sigma^2}-d<0$, then one has
$v(t)\rightarrow 0$  a.s. when $t\rightarrow +\infty$.
\end{theorem}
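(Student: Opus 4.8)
The plan is to study the logarithm of $v$, whose dynamics decouple from $u$ up to a single stochastic integral, and then to reduce the claim to a deterministic drift estimate combined with a martingale law of large numbers. First I would apply It\^o's formula to $\ln v(t)$. Since the second equation of \eqref{sy2} reads $dv=[muv-dv]\,dt+\sigma uv\,dB$, a direct computation in which the It\^o correction term $-\tfrac12\sigma^2u^2$ arises from the quadratic variation of the diffusion part yields
\begin{equation*}
d(\ln v)=\left[mu-d-\dfrac{\sigma^2}{2}u^2\right]dt+\sigma u\,dB.
\end{equation*}

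The key observation is that the drift coefficient is a downward parabola in $u$. Completing the square gives
\begin{equation*}
mu-\dfrac{\sigma^2}{2}u^2
=-\dfrac{\sigma^2}{2}\left(u-\dfrac{m}{\sigma^2}\right)^2+\dfrac{m^2}{2\sigma^2}
\leq \dfrac{m^2}{2\sigma^2},
\end{equation*}
valid pointwise in $u$ and hence along every trajectory, so that $d(\ln v)\leq\bigl(\tfrac{m^2}{2\sigma^2}-d\bigr)dt+\sigma u\,dB$. Integrating from $0$ to $t$ and dividing by $t$, I would obtain
\begin{equation*}
\dfrac{\ln v(t)}{t}\leq \dfrac{\ln v(0)}{t}
+\left(\dfrac{m^2}{2\sigma^2}-d\right)+\dfrac{M(t)}{t},
\qquad M(t):=\sigma\int_0^t u(s)\,dB(s).
\end{equation*}

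It then remains to kill the martingale term, and this is where the boundedness from Theorem~1 becomes essential — it is the step I expect to be the main obstacle, since it is the only point at which the global existence result feeds back in. The process $M(t)$ is a continuous local martingale with quadratic variation $\langle M\rangle_t=\sigma^2\int_0^t u^2(s)\,ds$. Because $\limsup_{t\to\infty}u(t)\leq rK/\mu$ a.s., the integrand $u^2$ is almost surely bounded, so $\limsup_{t\to\infty}\langle M\rangle_t/t<\infty$ a.s. The strong law of large numbers for local martingales then yields $\lim_{t\to\infty}M(t)/t=0$ a.s. Taking $\limsup_{t\to\infty}$ in the displayed inequality gives
\begin{equation*}
\limsup_{t\to\infty}\dfrac{\ln v(t)}{t}\leq \dfrac{m^2}{2\sigma^2}-d<0
\end{equation*}
by hypothesis, whence $\ln v(t)\to-\infty$ and therefore $v(t)\to 0$ a.s., as claimed.
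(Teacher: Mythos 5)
Your proposal is correct and follows essentially the same route as the paper: apply It\^o's formula to $\ln v$, complete the square in $u$ to bound the drift by $\tfrac{m^2}{2\sigma^2}-d$, and use the a.s.\ boundedness of $u$ from Theorem~1 to control the quadratic variation of $M(t)=\sigma\int_0^t u(s)\,dB(s)$ so that the strong law of large numbers for martingales gives $M(t)/t\to 0$. The only (cosmetic) difference is that your write-up states the final $\limsup$ bound cleanly, whereas the paper's last displayed inequality still carries a spurious $\log(v_0)/t$ term.
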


\begin{proof} 
Let us define $G_1(t):=\log(v(t))$. 
Applying It\^o's formula to $G_1$ leads to
\begin{align*}
dG_1(t)&=\left(mu(t)-d-\dfrac{\sigma^2}{2}u^2(t)\right)dt+\sigma u(t)dB(t),\\
dG_1(t)&=\left(-\dfrac{\sigma^2}{2}\left( u(t)-\dfrac{m}{\sigma^2}\right)^2 
+\dfrac{m^2}{2\sigma^2}-d\right)dt+\sigma u(t)dB(t),\\
dG_1(t)& \leq \left(\dfrac{m^2}{2\sigma^2}-d\right)dt+\sigma u(t)dB(t).
\end{align*}
Integrating from $0$ to $t$ and dividing both sides by $t$, we have
\begin{align*}
\dfrac{\log(v(t))}{t}
&\leq \dfrac{\log(v_0)}{t}+\dfrac{1}{t}
\int_0^t \left(\dfrac{m^2}{2\sigma^2}-d\right)ds
+\dfrac{\sigma}{t}\int_0^t u(s)dB(s)\\
&\leq \dfrac{\log(v_0)}{t}+\dfrac{1}{t}\int_0^t 
\left(\dfrac{m^2}{2\sigma^2}-d\right)ds
+\dfrac{\sigma}{t}\int_0^t u(s)dB(s)\\
&\leq \dfrac{\log(v_0)}{t}+\dfrac{m^2}{2\sigma^2}-d
+\dfrac{\sigma}{t}\int_0^t u(s)dB(s).
\end{align*}
Let $M_t:=\displaystyle \int_0^t \sigma u(s)dB_s$. Then,
\begin{align*}
\limsup_{t\rightarrow+\infty} \dfrac{<M_t,M_t>}{t}
=\limsup_{t\rightarrow+\infty} \dfrac{\sigma^2}{t}\int_0^t u^2(s)ds
\leq \sigma^2\left( \dfrac{rK}{\mu}\right)^2<+\infty
\end{align*}
and, by using the strong law of large numbers for martingales 
(see, e.g., \cite{14}),
\begin{align*}
\limsup_{t\rightarrow+\infty}\dfrac{M_t}{t}=0.
\end{align*}
Therefore,
\begin{equation*}
\limsup_{t\rightarrow+\infty}\dfrac{\log(v(t))}{t}
\leq  \dfrac{\log(v_0)}{t}+\dfrac{m^2}{2\sigma^2}-d.
\end{equation*}
Thus, if $ \dfrac{m^2}{2\sigma^2}-d<0$, 
then $v(t)\rightarrow 0$ when $t\rightarrow +\infty$ a.s.\qed
\end{proof}


\section{Persistence in the mean of total labour force}
\label{sec:4}

Now, we investigate the persistence property of $v(t)$ in the mean,
that is, we give conditions for which 
$$
\liminf_{t\rightarrow \infty}\dfrac{1}{t}\int_0^t v(s)ds>0.
$$
For convenience, we introduce the following notation:
$$
\langle x(t)\rangle := \dfrac{1}{t}\int_0^t x(s)ds.
$$

\begin{theorem}
\label{thm:4.1}
Let $(u(t), v(t))$ be a solution of system \eqref{sy2} 
with initial value 
$$
(u(0), v(0)) \in \mathbb{R}^2_+.
$$ 
If 
\begin{equation}
\label{eq:cond:pers}
R^s_0 := \dfrac{r}{d}-\dfrac{\sigma^2K^2}{2d}>1 
\quad \text{ and } \quad
m>\dfrac{r}{K},
\end{equation}
then the variable $v(t)$ satisfies the following expression:
\begin{equation}
\liminf_{t\rightarrow \infty} \langle v \rangle 
\geq  \dfrac{d(R^s_0-1)}{m+d}>0.
\end{equation}
\end{theorem}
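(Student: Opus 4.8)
The plan is to transform $v$ logarithmically, as in the proof of Theorem~\ref{thm3.1}, and to couple the resulting identity with the mass-balance law for $N=u+v$ obtained in the first theorem, so that the persistence estimate can be read off after passing to the limit in $t$. Before starting it is worth noting what the two hypotheses give: since $R^s_0>1$ forces $r-\frac{\sigma^2K^2}{2}>d>0$, the numerator $d(R^s_0-1)=r-\frac{\sigma^2K^2}{2}-d$ is strictly positive, and moreover $\frac{r}{K}>\frac{\sigma^2K}{2}$; combined with $m>\frac{r}{K}$ this yields $m>\frac{\sigma^2K}{2}$, a sign fact that will be needed when the quadratic terms in $u$ are discarded. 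Thus $R^s_0>1$ is responsible for the positivity of the final bound, while $m>\frac{r}{K}$ controls the signs in the elimination of the $u$-dependence.

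First I would apply It\^o's formula to $\log v$, which as computed in Theorem~\ref{thm3.1} gives $d\log v=\bigl(mu-d-\frac{\sigma^2}{2}u^2\bigr)dt+\sigma u\,dB$; integrating on $[0,t]$ and dividing by $t$ expresses $\frac{\log v(t)-\log v(0)}{t}$ as $m\langle u\rangle-d-\frac{\sigma^2}{2}\langle u^2\rangle+\frac{1}{t}M_t$ with $M_t=\sigma\int_0^t u\,dB$. In parallel I would integrate the balance law $dN=\bigl(ru(1-\frac{u}{K})-dv\bigr)dt$ from the first theorem, which after dividing by $t$ links $d\langle v\rangle$ to $\langle u\rangle$ and $\langle u^2\rangle$ plus the boundary term $\frac{N(t)-N(0)}{t}$; since $N$ is bounded a.s., that boundary term vanishes as $t\to\infty$. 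The point of carrying both relations is that the labour variable $v$ enters the free-job balance through the recruitment loss and the labour balance through the death rate, so a suitable combination collects these contributions into the factor $m+d$ appearing in the denominator of the claim.

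The core computation is then to eliminate the $u$-dependence. Using the a priori bound on $u$ from the first theorem to dominate the diffusion contribution $\frac{\sigma^2}{2}\langle u^2\rangle$ by $\frac{\sigma^2K^2}{2}$, and using $m>\frac{\sigma^2K}{2}$ to guarantee that the surviving linear-in-$u$ expression keeps the correct sign, I would assemble the two averaged identities into an inequality of the form $(m+d)\langle v\rangle\ge r-\frac{\sigma^2K^2}{2}-d-\frac{\log v(t)-\log v(0)}{t}+\varepsilon(t)$, where $\varepsilon(t)$ collects the boundary and martingale remainders. The martingale term is handled exactly as in Theorem~\ref{thm3.1}: the bound $\langle M,M\rangle_t/t\le\sigma^2(rK/\mu)^2$ lets the strong law of large numbers for local martingales give $M_t/t\to0$ a.s., so $\varepsilon(t)\to0$.

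Finally I would take the limit inferior as $t\to\infty$. Because $v$ is bounded above by the first theorem, $\limsup_{t\to\infty}\frac{\log v(t)}{t}\le0$, so the contribution of $-\frac{\log v(t)}{t}$ in the limit is nonnegative; together with $\varepsilon(t)\to0$ this yields $(m+d)\liminf_{t\to\infty}\langle v\rangle\ge r-\frac{\sigma^2K^2}{2}-d=d(R^s_0-1)$, i.e. the claimed estimate, which is strictly positive precisely because $R^s_0>1$. The main obstacle I anticipate is this elimination and assembly step: one has to choose the linear combination of the $\log v$ identity and the $N$-balance so that both $\langle u\rangle$ and $\langle u^2\rangle$ are either cancelled or dominated \emph{with the correct sign}, leaving a clean inequality in $\langle v\rangle$ alone with coefficient $m+d$. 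Getting those signs to work is exactly what forces the two structural hypotheses, and it is also the place where one must check that the a priori bounds invoked to throw away the quadratic terms are really the ones delivered by the first theorem.
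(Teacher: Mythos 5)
There is a genuine gap, and it sits exactly where you anticipated trouble: the ``elimination and assembly'' step cannot be carried out from the two relations you chose. Your plan uses only (i) the averaged It\^o identity for $\log v$, namely $\frac{\log v(t)-\log v(0)}{t}=m\langle u\rangle-d-\frac{\sigma^2}{2}\langle u^2\rangle+\frac{M_t}{t}$, and (ii) the integrated balance for $N=u+v$, namely $\frac{N(t)-N(0)}{t}=r\langle u\rangle-\frac{r}{K}\langle u^2\rangle-d\langle v\rangle$. Observe first that $\langle v\rangle$ enters these relations only through (ii), with coefficient $d$; there is no second channel producing an $m\langle v\rangle$ term, so the factor $m+d$ could only arise by rescaling (ii), and the inequality between $\langle u\rangle$ and $\langle u^2\rangle$ that this rescaling would require is not available. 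More decisively, consider the pair $u(t)=u(0)e^{-t}$, $v(t)=v(0)e^{-dt}$: it is positive, bounded, satisfies (i) and (ii) up to $O(1/t)$ errors --- which is all one retains once the martingale and boundary terms have been sent to zero by the strong law of large numbers --- and yet has $\langle v\rangle\to 0$. Indeed, your own intermediate inequality $(m+d)\langle v\rangle\ge r-\frac{\sigma^2K^2}{2}-d-\frac{\log v(t)-\log v(0)}{t}+\varepsilon(t)$ fails for this pair (its left side tends to $0$, its right side to $r-\frac{\sigma^2K^2}{2}>0$). Since any argument built solely from (i), (ii), the a priori bounds of the first theorem and the martingale limits applies equally to this pair, no choice of signs or linear combination of those ingredients can yield $\liminf\langle v\rangle\ge d(R_0^s-1)/(m+d)>0$.

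What is missing is the It\^o identity for $\log u$, which is the engine of the paper's proof: $\frac{\log u(t)-\log u(0)}{t}=r-\frac{r}{K}\langle u\rangle-m\langle v\rangle-\frac{\sigma^2}{2}\langle v^2\rangle-\frac{\sigma}{t}\int_0^t v\,dB$. This identity converts the logistic growth rate $r$ --- which acts precisely when $u$ is small, and which your $N$-balance sees only multiplied by $u$, hence as negligible in exactly the dangerous regime --- into the additive constant $r$ heading the persistence bound: since $u$ is bounded above, $\limsup_{t\to\infty}\log u(t)/t\le 0$, so the averages of $v$ cannot all be small without contradiction. It is also the source of the $-m\langle v\rangle$ term which, added to the $-d\langle v\rangle$ coming from the integrated second equation of \eqref{sy2}, produces the coefficient $m+d$. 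The paper accordingly combines \emph{three} relations (the integrated $v$-equation, the $\log u$ identity, and the $\log v$ identity), uses $m>\frac{r}{K}$ to discard the term $\left(m-\frac{r}{K}\right)\langle u\rangle\ge 0$, and bounds the quadratic terms by $\frac{\sigma^2}{2}K^2$; your hypotheses enter there, not through the derived inequality $m>\frac{\sigma^2K}{2}$. Your treatment of the remainder terms (strong law of large numbers for martingales, $\limsup\log v(t)/t\le 0$ from boundedness) does match the paper, but the core inequality on which your proof rests is not obtainable from the ingredients you selected.
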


\begin{proof}
Using the second equation of system \eqref{sy2}, we have
\begin{equation}
\label{vt}
\dfrac{v(t)-v(0)}{t}=m\langle u v \rangle
-d \langle v\rangle+\dfrac{\sigma}{t}\int_0^t  u(s)v(s)dB.
\end{equation}
Applying It\^o's formula on model \eqref{sy2} leads to
\begin{equation}
\label{lnu}
d\ln (u(t))=\left[r\left(1-\dfrac{u}{K}\right)-mv
-\dfrac{1}{2}\sigma^2v^2\right]dt-\sigma v dB
\end{equation}
and
\begin{equation}
\label{lnv}
d\ln (v(t))=\left[mu-d-\dfrac{1}{2}\sigma^2u^2\right]dt+\sigma u dB.
\end{equation}
Integrating both sides of \eqref{lnu} and \eqref{lnv} 
from $0$ to $t$, and dividing by $t$, leads to
\begin{equation}
\label{lnu1}
\dfrac{\ln(u(t))-\ln(u(0))}{t}=r-\dfrac{r}{K}\langle u\rangle-m\langle v\rangle
-\dfrac{\sigma^2}{2}\langle v^2\rangle-\dfrac{\sigma}{t}\int_0^t  v(s)dB
\end{equation}
and
\begin{equation}
\label{lnv1}
\dfrac{\ln(v(t))-\ln(v(0))}{t}=m\langle u\rangle-d
-\dfrac{\sigma^2}{2}\langle u^2\rangle+\dfrac{\sigma}{t}\int_0^t  u(s)dB.
\end{equation}
Combining \eqref{vt}, \eqref{lnu1}, and \eqref{lnv1}, we derive that
\begin{equation*}
\begin{split}
\dfrac{v(t)-v(0)}{t}
&+\dfrac{\ln(u(t))-\ln(u(0))}{t}+\dfrac{\ln(v(t))-\ln(v(0))}{t}\\
&=r-\dfrac{r}{K}\langle u\rangle+m\langle u\rangle+(m-\sigma^2)\langle u v\rangle
-(d+m)\langle v\rangle-d\\
&\quad -\dfrac{\sigma^2}{2}\langle (u+v)^2\rangle
+\dfrac{\sigma}{t}\int_0^t(u(s)-v(s)+u(s)v(s))dB_s\\
&\geq r-d-\dfrac{\sigma^2}{2}K^2- (d+m)\langle v\rangle+\left(m-\dfrac{r}{K}\right)
\langle u\rangle\\
&\quad +\dfrac{\sigma}{t}\int_0^t(u(s)-v(s)+u(s)v(s))dB_s.
\end{split}
\end{equation*}
Since $m-\dfrac{r}{K}>0$, then
\begin{align*}
\dfrac{v(t)-v(0)}{t}&+\dfrac{\ln(u(t))-\ln(u(0))}{t}+\dfrac{\ln(v(t))-\ln(v(0))}{t}\\
& \geq  r-d-\dfrac{\sigma^2}{2}K^2- (d+m)\langle v\rangle
+\dfrac{\sigma}{t}\int_0^t(u(s)-v(s)+u(s)v(s))dB_s.
\end{align*}
Therefore,
\begin{align*}
(d+m)\langle v\rangle\geq 
& r-d-\dfrac{\sigma^2}{2}K^2
+\dfrac{\sigma}{t}\int_0^t(u(s)-v(s)+u(s)v(s))dB(s)\\
&-\dfrac{v(t)-v(0)}{t}-\dfrac{\ln(u(t))-\ln(u(0))}{t}
-\dfrac{\ln(v(t))-\ln(v(0))}{t}.
\end{align*}
Let us denote
$$
M_1(t):=\sigma\int_0^t(u(s)-v(s)+u(s)v(s))dB(s).
$$
Using the strong law of large numbers for martingales,
together with the fact that almost surely for every 
$\varepsilon$ there exists $T$ such that 
$0 < u(t), v(t) < \frac{rK}{\mu} + \varepsilon$ 
for every $t > T$, we can say that
\begin{equation*}
\lim_{t\rightarrow \infty} \dfrac{v(t)}{t}=0,
\quad \lim_{t\rightarrow \infty} \dfrac{u(t)}{t}=0,
\quad \lim_{t\rightarrow \infty} \dfrac{M_1(t)}{t}=0
\quad a.s.
\end{equation*}
Thus,
\begin{align*}
\liminf_{t\rightarrow \infty} \langle v\rangle\geq 
& \dfrac{r-d-\dfrac{\sigma^2}{2}K^2}{(d+m)}=\dfrac{d(R^s_0-1)}{m+d}>0,
\end{align*}
where $R^s_0=\dfrac{r}{d}-\dfrac{\sigma^2K^2}{2d}$.
The proof is complete.\qed
\end{proof}


\section{Numerical simulations}
\label{sec:5}

In this section, we illustrate our mathematical results
through numerical simulations. In the two examples considered, 
we apply the algorithm presented in \cite{Higham} 
to solve system \eqref{sy2} and we use the parameter 
values from Table~\ref{tabl1}, inspired from \cite{Riad}.
\begin{table}
\caption{Parameter values used in the numerical simulations.}
\label{tabl1}
\centering
\begin{tabular}{|c|c|c|c|}\hline \hline
Parameters  & Fig.~\ref{fig1} & Fig.~\ref{fig2}\\ \hline \hline
$r$ & $1$&  $1$\\
$d$	&   $0.2$&$0.2$\\
$m$ &$0.001$&$0.1$\\
$K$   & $100$& $100$   \\
$\sigma$ & $0.09$& $0.001$\\ \hline
\end{tabular}
\end{table}

Figure~\ref{fig1} shows the evolution of the free jobs and the total 
labour force during the period of observation. It can be seen that 
both curves of the total labour force, corresponding to the deterministic 
and to the stochastic models, converge toward zero. This indicates the extinction 
of the total labour force, which is consistent with our theoretical results.
Indeed, for the used parameters (see Table~\ref{tabl1}), one has 
$\dfrac{m^2}{2\sigma^2}-d=-0.19<0$ and it follows, from
Theorem~\ref{thm3.1}, that $v(t)\rightarrow 0$ 
with probability one when $t\rightarrow +\infty$. 
\begin{figure}
\centering
\subfloat[Behaviour of free jobs]{\label{fig:1a}
\includegraphics[scale=0.23]{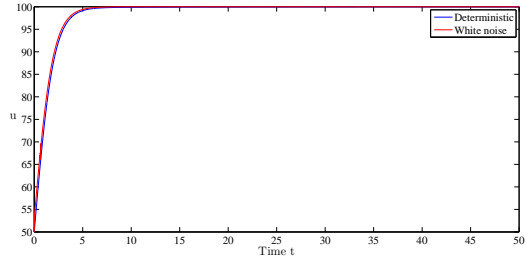}}\\
\subfloat[Behaviour of total labour force]{\label{fig:1b}
\includegraphics[scale=0.23]{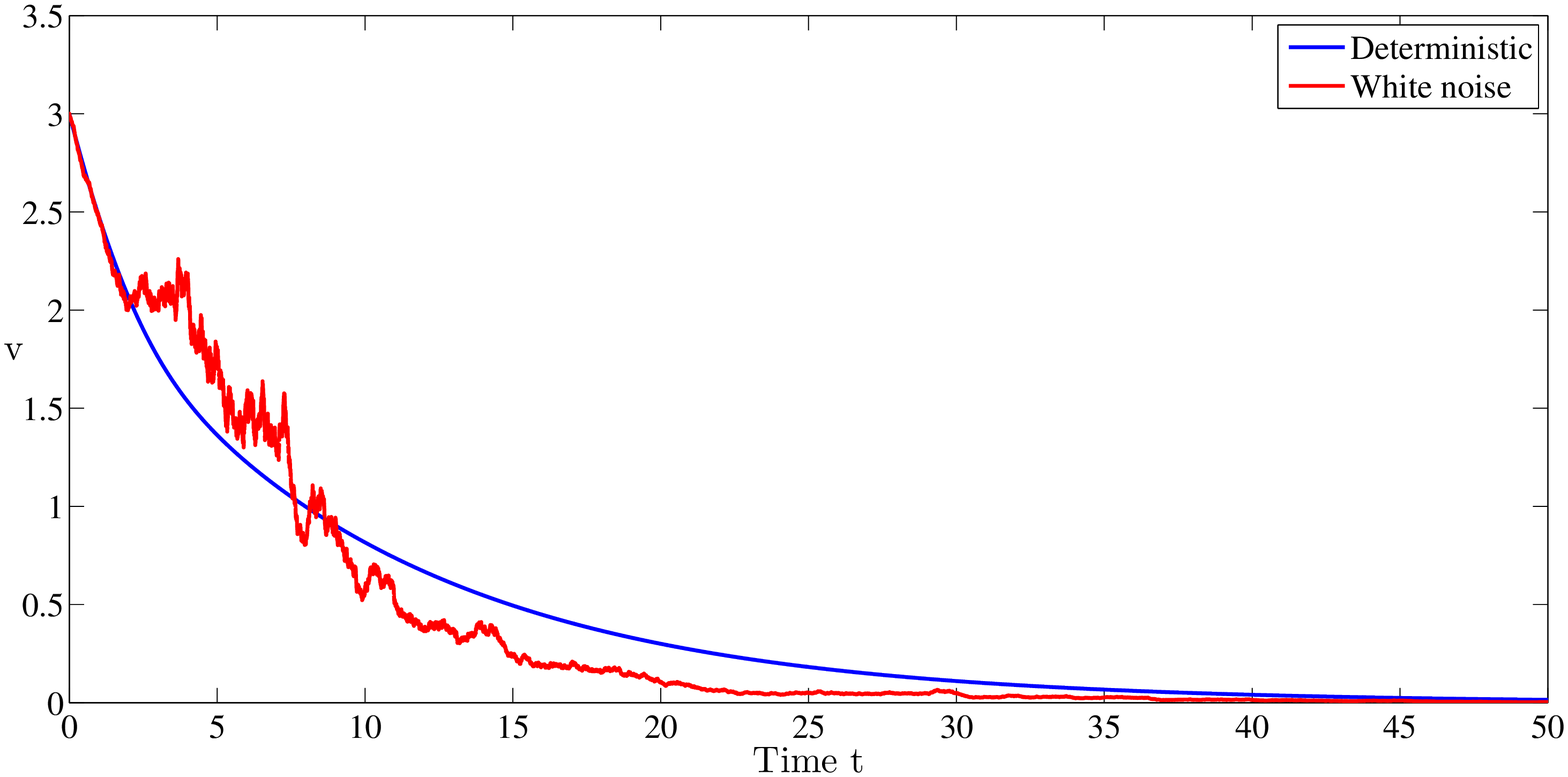}}
\caption{Extinction of the total labour force.}\label{fig1}
\end{figure}

The evolution of the free jobs and the total labour force, for both
deterministic and stochastic models, is also illustrated in Fig.~\ref{fig2}. 
In this case, the key conditions \eqref{eq:cond:pers} of our
Theorem~\ref{thm:4.1} are satisfied:
$R^s_0=\dfrac{r}{d}-\dfrac{\sigma^2K^2}{2d}=4.99>1$ 
and $m-\dfrac{r}{K}=0.09>0$. As predicted by Theorem~\ref{thm:4.1}, one can 
clearly see in Fig.~\ref{fig2} the persistence of the total labour force.
\begin{figure}
\centering
\subfloat[Behaviour of free jobs]{\label{fig:2a}
\includegraphics[scale=0.23]{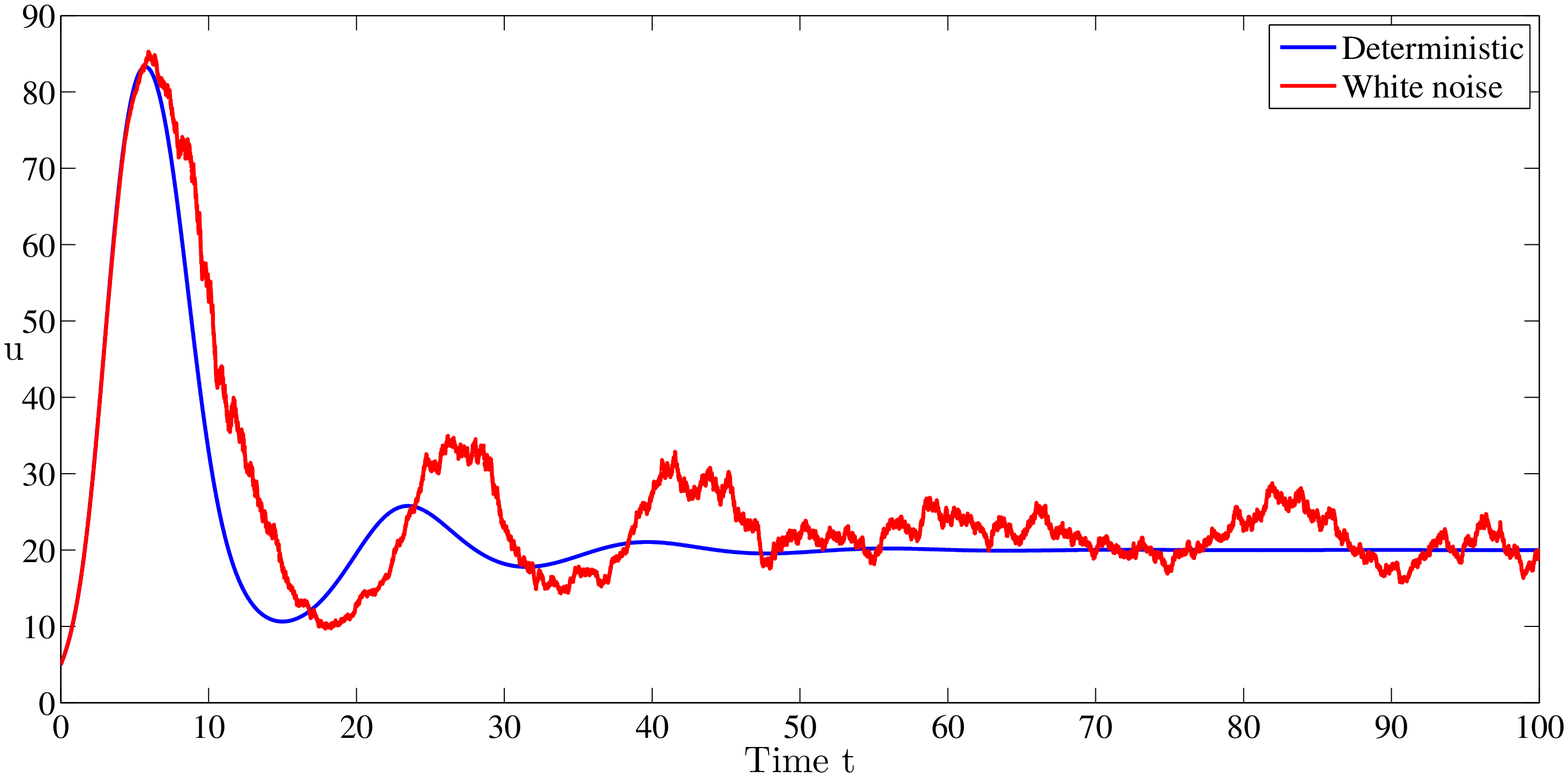}}\\
\subfloat[Behaviour of total labour force]{\label{fig:2b}
\includegraphics[scale=0.23]{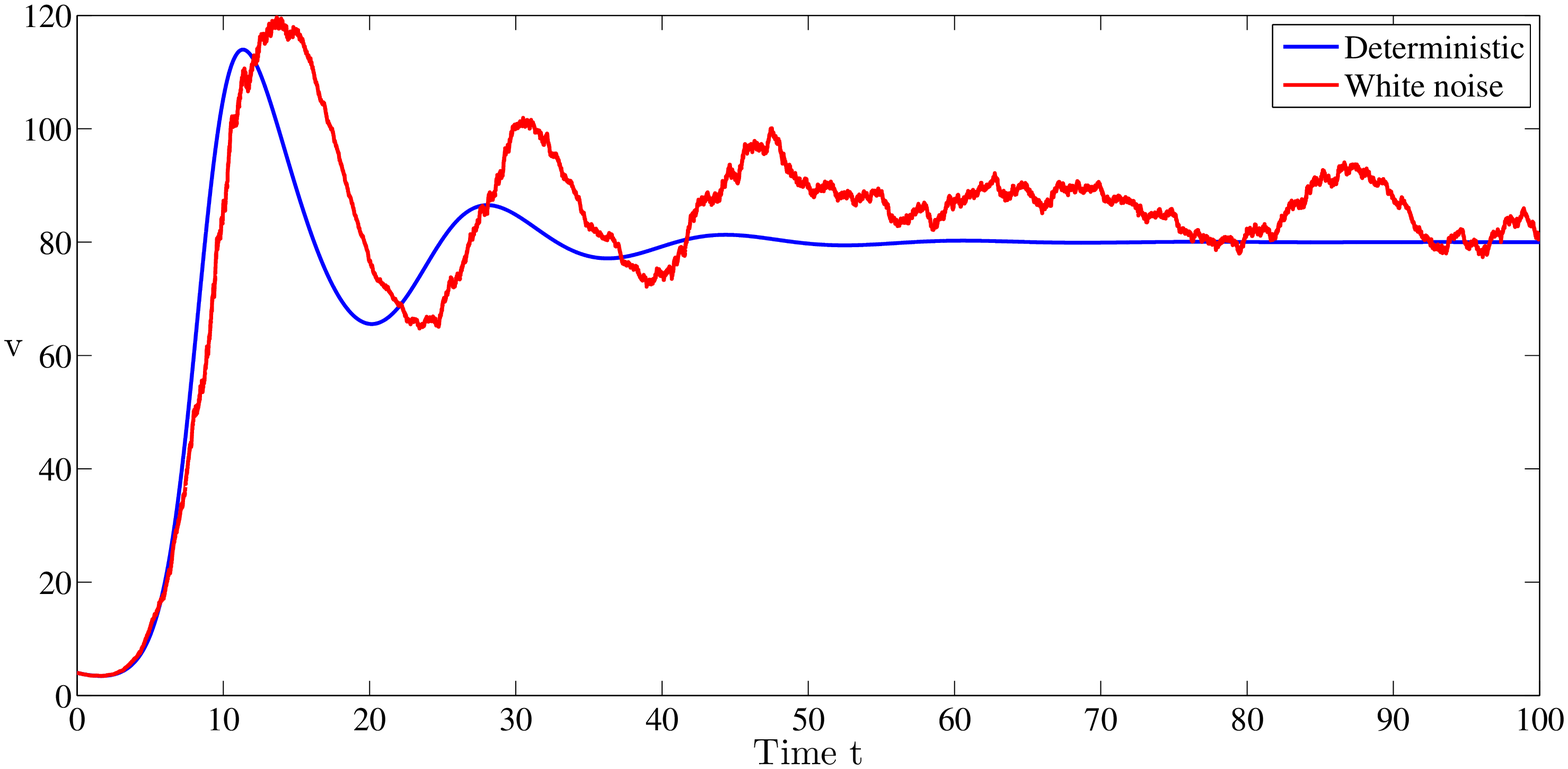}}
\caption{Persistence of the total labour force.}\label{fig2}
\end{figure}


\section{Conclusions}
\label{sec:6}

The labour force (workforce), which can be defined as the total number of people 
who are eligible to work, is a centred component of each modern economy, whereas 
free jobs are systematically supplied by companies.
In this work, we have proposed and analysed a capital-labour model 
by means of an economic dynamical system describing the interaction 
between free jobs and labour force. Mathematically, our model is 
governed by stochastic differential equations, where 
the component of stochastic noise is considered 
for an additional degree of realism, intended 
to describe well reality. Furthermore, the transmission rate by which 
the labour force individuals are moving to the free jobs compartment 
is modelled by the logistic growth function with an appropriate carrying capacity $K$. 
Some relevant results were obtained. First of all, by proving existence 
and uniqueness of a global positive solution, as well as its boundedness,
we have shown that the proposed model is mathematically and economically well-posed. 
Moreover, a sufficient condition for the extinction 
of labour force is obtained, via the strong law of large numbers for martingales, 
in addition to adequate sufficient conditions for the persistence in mean.
In order to illustrate our theoretical results, we have implemented some numerical 
simulations where, for a good accuracy of the approximate numerical solutions, 
the Milstein scheme has been used.


\subsection*{Acknowledgements}

This research is part of first author's Ph.D. project, 
which is carried out at University of Aveiro under 
the Doctoral Program in Applied Mathematics
of Universities of Minho, Aveiro, and Porto (MAP-PDMA).
It was partially supported by the 
Portuguese Foundation for Science and Technology (FCT) 
within project UIDB/04106/2020 (CIDMA).



\end{document}